%
%
%

\documentclass[12pt]{article}

\usepackage{overpic,graphicx}
\usepackage{amssymb,amsfonts,amsmath,amsthm}
\usepackage{animate}
\usepackage{color}
\usepackage[english]{babel}
\usepackage[latin1]{inputenc}
\usepackage{times}
\usepackage{geometry}
\usepackage[breaklinks,pdfstartview=FitH,colorlinks,linktocpage,bookmarks=true]{hyperref}

\geometry{a4paper,top=1.1cm,bottom=1.5cm,left=2cm,right=2cm}

\newtheorem{thm}{Theorem}

\newtheorem{lem}[thm]{Lemma}


\newtheorem{mainthm}[thm]{Main Theorem}
\newtheorem{splitting-lemma}[thm]{Splitting Lemma}


\def\deg{\mathop{\mathrm{deg}}\nolimits}



\long\def\comment#1\endcomment{}

\begin{document}


\title{Surfaces containing two circles through each point and 
decomposition of quaternionic matrices}

\author{A. Pakharev, M. Skopenkov}
\date{}

\maketitle

\begin{abstract}
We find all analytic surfaces in space $\mathbb{R}^3$
such that through each point of the surface one can draw two circular arcs fully contained in the surface.
The proof uses a new decomposition technique for quaternionic matrices.

\smallskip

\noindent{\bf Keywords}: circle, Moebius geometry, quaternion, Pythagorean n-tuple, matrix decomposition

\noindent{\bf 2010 MSC}:
\end{abstract}

\footnotetext[0]{
The research was carried out at the IITP RAS at the expense of the Russian
Foundation for Sciences (project N14-50-00150). The authors are grateful to R. Krasauskas and N. Lubbes for the movie in Figure~\ref{movie}.
}

We find all surfaces in space $\mathbb{R}^3$ 
such that through each point of the surface one can draw two circular arcs fully contained in the surface. 
Due to natural statement and obvious architectural motivation, this is a problem which \emph{must} be solved by mathematicians. 
It turned out to be hard and remained open in spite of many partial advances starting from the works of Darboux from the XIX century, e.g., see \cite{Niels-13,coolidge:1916,NS11}.

\begin{mainthm}\label{mainthm}
If through each point of an analytic surface in $\mathbb{R}^3$ one can draw two transversal circular arcs fully contained in the surface (and analytically depending on the point) then the surface is an image of a subset of one of the following sets under some composition of inversions:
\begin{itemize}
\item[(E)]\label{ETS} the set $\{\,p+q:p\in\alpha,q\in\beta\,\}$,  where $\alpha,\beta$ are two circles in $\mathbb{R}^3$;
\item[(C)]\label{CTS} the stereographic projection of the 
set $\{\,p\cdot q:p\in\alpha,q\in\beta\,\}$,  where $\alpha,\beta$ are two circles in the sphere $S^3$ identified with the set of unit quaternions; 
\item[(D)]\label{DC} the stereographic projection of the intersection of $S^3$ with another 3-dimensional quadric.
\end{itemize}
\end{mainthm}

\vspace{-0.4cm}

\begin{figure}[hbt]
\begin{center}
\begin{tabular}{c}
\includegraphics[height=2.2cm]{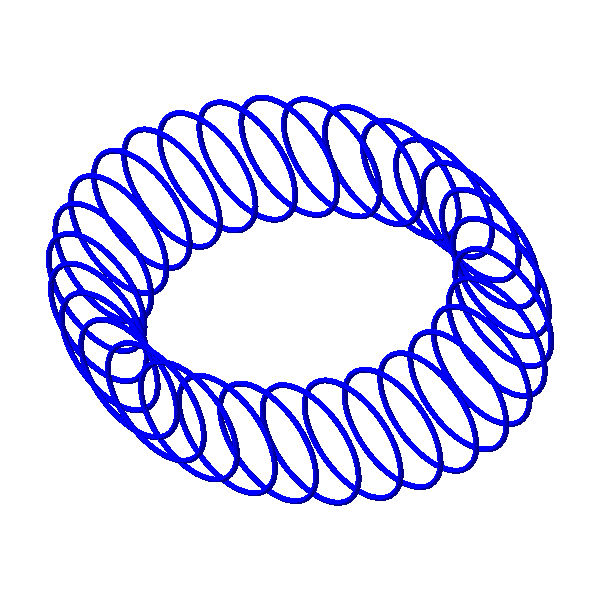}
\end{tabular}
\hspace{-0.5cm}
\begin{tabular}{c}
\includegraphics[height=2.2cm]{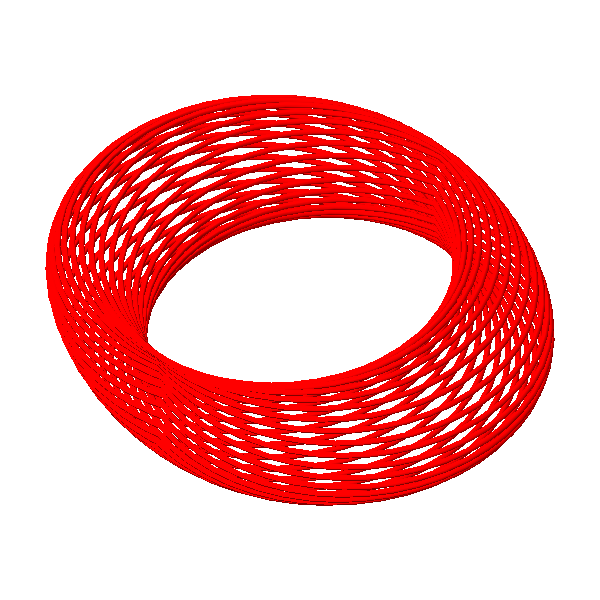}
\end{tabular}
\begin{tabular}{c}
\includegraphics[height=1.8cm]{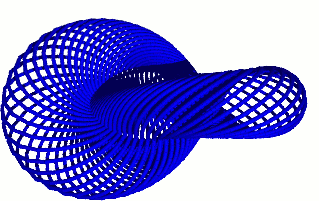}
\end{tabular}
\hspace{-0.5cm}
\begin{tabular}{c}
\includegraphics[height=1.8cm]{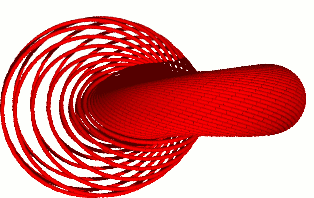}
\end{tabular}
\begin{tabular}{c}
\includegraphics[width=0.15\textwidth]{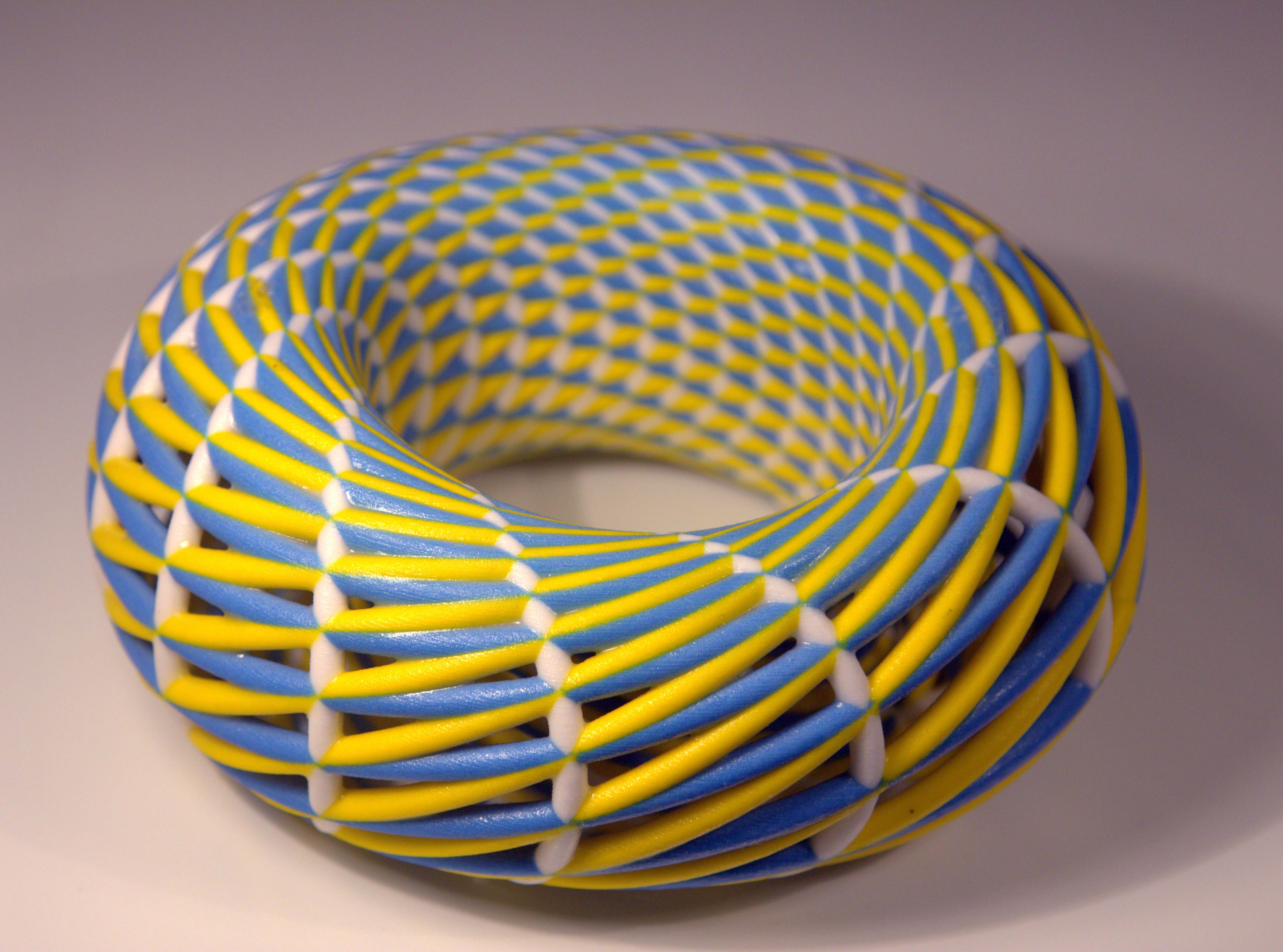}
\end{tabular}	
\end{center}
\vspace{-0.8cm}
\caption{
Euclidean (E) and 
Clifford  (C) translational surfaces, and a Darboux cyclide (D).}
\label{movie}
\end{figure}

Let us give a general plan of the proof of the theorem and prove one particular lemma. 
We solve a more general problem of finding all such surfaces in $S^4$ instead of $\mathbb{R}^3$. 
Using Schicho's parametrization of surfaces containing two conic sections through each point \cite{schicho:2001} we reduce the problem 
to solving the equation
$
X_1^2+X_2^2+X_3^2+X_4^2+X_5^2=X_6^2
$ 
in polynomials $X_{1},\dots,X_6\in\mathbb{R}[u,v]$ of degree at most $2$ in each of the variables $u$ and $v$. 
Such ``Pythagorean $6$-tuple'' of polynomials defines a surface $X_1(u,v):\dots:X_6(u,v)$ in $S^4$ containing two (possibly degenerate) circles $u=\mathrm{const}$ and $v=\mathrm{const}$ through each point. 

Our approach uses decomposition of quaternionic matrices. 
The above equation holds if and only if the matrix 
{ $\left(\begin{smallmatrix}X_6-X_5 & X_1+iX_2+jX_3+kX_4\\ X_1-iX_2-jX_3-kX_4 & X_6+X_5\end{smallmatrix}\right)$}
is \emph{degenerate}, i.e., has left-linearly dependent rows. 
A $2\times 2$ matrix $M$ \emph{splits}, 
if it is a Kronecker product of two vectors, 
i.e., $M_{ij}=x_iy_j$ for $1\le i,j,\le 2$ and some $x_1,x_2,y_2,y_2\in\mathbb{H}[u,v]$. 
Each splitting matrix must be degenerate, and over a commutative unique factorization domain the converse is also true. 
Several results measuring the relation between the two notions over $\mathbb{H}[u,v]$ lead to the solution of our equation.
We prove just one now. 

\begin{lem} \label{l-adding variable} Each degenerate matrix with the entries from $\mathbb{H}[u,v]$ of degree at most $1$ in $v$ splits.
\end{lem}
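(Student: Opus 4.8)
The plan is to regard the lemma as a statement about linear matrix pencils over the one-variable ring. Write $R=\mathbb{H}[u]$. Since every nonzero quaternion is invertible, division with remainder in the variable $u$ works in $R$, so $R$ is a noncommutative left and right principal ideal domain; consequently $\mathbb{H}[u,v]=R[v]$ is an Ore domain and embeds in a skew field of fractions $Q$. Under this identification ``degenerate'' means exactly that the two rows are left-linearly dependent over $Q$, i.e.\ that $M$ has rank at most $1$; clearing denominators with the Ore property shows this is equivalent to left-dependence with coefficients in the ring itself. I would first record two trivialities that drive the whole argument: splitting is preserved when $M$ is multiplied on either side by a matrix invertible over $R[v]$ (if $M=\mathbf{x}\mathbf{y}^{T}$ then $PMQ=(P\mathbf{x})(Q^{T}\mathbf{y})^{T}$), and $M$ splits at once whenever it has a zero row or a zero column. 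After disposing of $M=0$ and of the cases with a vanishing row or column, the point of the hypothesis $\deg_v M\le 1$ is that $M=A+vB$ with $A,B\in\mathrm{Mat}_2(R)$ is a \emph{linear pencil} over $R$.

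The strategy is then to reduce this singular pencil, by a strict equivalence $M\mapsto PMQ$ with $P,Q\in GL_2(R)$ constant in $v$, to a pencil with a zero row or a zero column. Such transformations keep $M$ linear in $v$ and, lying in $GL_2(R[v])$, preserve splitting; and the target form splits visibly, so $M$ itself splits. The key structural claim is the noncommutative analogue of Kronecker's description of a singular $2\times 2$ pencil: if $A+vB$ has rank at most $1$ over $Q$, then the pair $(A,B)$ has a common nonzero right null vector $w\in R^{2}$ (so $Aw=Bw=0$) or a common nonzero left null vector. Over a field this is exactly the statement that the Kronecker canonical form of a generically rank-one $2\times 2$ pencil is one of $\left(\begin{smallmatrix}1&v\\0&0\end{smallmatrix}\right)$, $\left(\begin{smallmatrix}1&0\\v&0\end{smallmatrix}\right)$, $\left(\begin{smallmatrix}\alpha+v\beta&0\\0&0\end{smallmatrix}\right)$, each of which indeed has a constant one-sided null vector.

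Granting the claim, the reduction is short. Suppose $w\in R^{2}$ is a common right null vector. Working in the principal ideal domain $R$, I would divide out the greatest common right divisor of its two entries to obtain a primitive vector $\tilde w$ with $M\tilde w=0$ (here the domain property of $R[v]$ is used: $(M\tilde w)g=Mw=0$ with $g\ne 0$ forces $M\tilde w=0$), complete $\tilde w$ to a matrix $Q\in GL_2(R)$, and observe that $MQ$ has a zero column. Since $MQ$ splits and $Q$ is invertible over $R[v]$, so does $M$. A common left null vector is handled symmetrically by a row operation.

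The main obstacle is proving the structural claim over the noncommutative PID $R$ rather than over a field. My plan is to establish it first over the skew field $\tilde Q=\mathrm{Frac}(\mathbb{H}[u])$, where one-sided dimension theory is available and $\tilde Q[v]$ is again a principal ideal domain, so that the classification of pencils via finitely generated torsion modules over $\tilde Q[v]$ carries through and yields a constant one-sided null vector with entries in $\tilde Q$; and then to descend to $R$ by clearing $u$-denominators (via the Ore condition) and extracting the appropriate one-sided gcd. The genuinely noncommutative points to watch are that gcd's must be replaced throughout by one-sided greatest common right and left divisors, that elementary operations no longer commute and so must be sequenced carefully, and that every transformation used must be honestly invertible over $R[v]$ so as not to destroy splitting. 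It is precisely the linearity in $v$ that makes all of this possible: it turns $M$ into a pencil over the \emph{one}-variable principal ideal domain $\mathbb{H}[u]$ and caps the Kronecker minimal indices at $1$, whereas for $\deg_v\ge 2$ one leaves the pencil world, $\mathbb{H}[u,v]$ is a genuinely two-dimensional, non-principal ring, and the noncommutativity (harmless over a commutative unique factorization domain) is no longer controlled.
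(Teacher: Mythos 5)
Your route is genuinely different from the paper's, and it can be made to work. The paper never leaves the polynomial ring: it runs a double induction (on the number of entries actually involving $v$, and, nested inside, on the minimal degree in $u$ of the relevant coefficients, respectively of the entries themselves when $v$ is absent), at each step using left division with remainder in $\mathbb{H}[u]$ and a column operation to strictly decrease a degree, until some entry vanishes and the matrix splits visibly. You instead treat $M=A+vB$ as a singular linear pencil over the noncommutative principal ideal domain $R=\mathbb{H}[u]$, extract a $v$-constant one-sided null vector, complete it to an element of $GL_2(R)$, and transport splitting through the resulting strict equivalence. What your packaging buys is conceptual clarity: it isolates exactly where the hypothesis $\deg_v M\le 1$ enters (the two minimal indices of a rank-one $2\times 2$ pencil cannot both be positive). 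What the paper's induction buys is self-containedness: every step is an explicit Euclidean division, with no structure theory over skew fields invoked.

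Two warnings about the step you leave to ``carries through,'' which is the crux. First, the mechanism you name is the wrong one: finitely generated \emph{torsion} modules over $\tilde Q[v]$ govern the regular part of a pencil (elementary divisors), whereas the constant null vector you need comes precisely from the \emph{singular} part (the minimal indices), which torsion theory does not see. Fortunately your structural claim is true over any skew field and has a short direct proof, so you should substitute that: if $z(v)=z_0+\dots+z_\epsilon v^\epsilon$ is a right null vector of minimal degree over $\tilde Q[v]$, then $z_0\ne 0$ and $z_\epsilon\ne 0$ (minimality plus centrality of $v$), and comparing the constant and top coefficients of $Mz=0$ gives $Az_0=0=Bz_\epsilon$; hence $A$ and $B$ are both singular, so each nonzero one factors over the skew field $\tilde Q$ as a column times a row, $A=a\alpha$, $B=b\beta$. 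If $b\in a\tilde Q$, any nonzero $w$ with $wa=0$ is a common left null vector; if $\beta\in\tilde Q\alpha$, any nonzero $z$ with $\alpha z=0$ is a common right null vector; and if neither holds, then $a,b$ are right-independent and $\alpha,\beta$ are left-independent, so $Mz=a(\alpha z)+b(v\beta z)=0$ forces $\alpha z=\beta z=0$ and $z=0$, contradicting degeneracy. Second, the completion of a primitive column $\tilde w$ to a matrix in $GL_2(R)$ also needs an argument over the noncommutative $R$; the cleanest is to run the Euclidean algorithm on the two entries by row operations to get $U\tilde w=(1,0)^T$ with $U\in GL_2(R)$ and take $Q=U^{-1}$. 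Note that after these repairs your proof, like the paper's, ultimately runs on the same engine --- division with remainder in $\mathbb{H}[u]$ --- only packaged as structure theory rather than as induction.
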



\begin{proof}
Consider the following $2$ cases.

Case 1: 
The matrix entries are not dependent of $v$.
In this case we prove the lemma by induction over the minimal degree of the entries using division with remainders in $\mathbb{H}[u]$ \cite{Ore}. 

The base of induction is the case when one of the entries vanishes, say, $M_{22}=0$. 
Since the matrix is degenerate it follows that then $M_{12}M_{21}=0$. Without loss of generality we have $M_{21}=0$. 
Thus the matrix splits as $M=(1,0)\otimes (M_{11},M_{12})$.

To perform induction step, assume that each entry is nonzero.
Without loss of generality let $M_{22}$ be the entry with the minimal degree. 
Divide $M_{21}$ by $M_{22}$ from the left with remainders: $M_{21}=M_{22}X+M'_{21}$, where $X,M'_{21}\in\mathbb{H}[u]$ and $\deg M'_{21}<\deg M_{22}$. 
Subtract the second column of the matrix $M$ right-multiplied by $X$ from the first column. 
The resulting matrix is also degenerate and has the entry $M'_{21}$ of degree smaller than the degree of $M_{22}$. By the inductive hypothesis, the resulting matrix splits. Thus the initial matrix $M$ splits.

Case 2: Some entries of the matrix depend nontrivially of $v$. 
In this case we prove the lemma by induction over the number of such entries.

The base of induction is the case when there is only one or two such entries on one diagonal. 
That is, say, $M_{11}$ depends on $v$, but $M_{12}$ and $M_{21}$ does not. 
Since $M$ is degenerate it follows that then $M_{22}=0$. 
Literally as in the base of induction in Case 1 we obtain that $M$ splits.

Introduce some notation: $M_{ij}(u,v)=:M^{(1)}_{ij}(u)v+M^{(0)}_{ij}(u)$.
The induction step is performed using another induction over the minimal degree of $M^{(1)}_{ij}(u)$ among all $i,j$ such that $M^{(1)}_{ij}(u)\ne 0$. 
Assume without loss of generality that $M_{22}^{(1)}$ is nonzero and has the minimal degree, and $M^{(1)}_{21}$ is also nonzero (otherwise consider the conjugate matrix). 
Divide $M^{(1)}_{21}$ by $M^{(1)}_{22}$ from the left with remainders: $M^{(1)}_{21}=M^{(1)}_{22}X+M'_{21}$, 
where $X,M'_{21}\in\mathbb{H}[u]$ and $\deg M'_{21}<\deg M^{(1)}_{22}$. 
Subtract the second column of the matrix $M$ right-multiplied by $X$ from the first column. 
The resulting matrix is also degenerate and has either smaller number of nonzero $M^{(1)}$-s or smaller minimal degree of those 
(depending on whether $M'_{21}$ vanish or not). 
By the inductive hypothesis, the resulting matrix splits.
Thus the initial matrix $M$ splits.
\end{proof}

\noindent
\textsc{Alexey Pakharev\\
Math department, Northeastern University, Boston}
\\
\texttt{E-mail: alexey.pakharev@gmail.com}
\bigskip

\noindent
\textsc{Mikhail Skopenkov\\
National Research University Higher School of Economics,\\
and\\
Institute for information transmission problems, Russian Academy of Sciences} 
\\
\texttt{skopenkov@rambler.ru} \quad \url{http://skopenkov.ru}

\end{document}